\theoremstyle{plain}
\newtheorem{theorem}{Theorem}
\newtheorem{lemma}[theorem]{Lemma}
\theoremstyle{definition}
\newtheorem{definition}{Definition}
\newtheorem{remark}{Remark}
\title{
The number of generalized balanced lines\footnote{This work started
at the 6th Iberian Workshop on Computational Geometry, in
Aveiro, and was concluded while Gelasio Salazar was visiting
Departament of Mathematics of Alcal\'a University under the program
Giner de los Ríos.} }
\author{
David Orden\thanks{Departamento de Matemáticas, Universidad de Alcalá, {\tt
david.orden@uah.es},
partially supported by grant
MTM2008-04699-C03-02.} \and Pedro Ramos\thanks{Departamento de Matemáticas, Universidad de Alcalá,
{\tt pedro.ramos@uah.es},
partially supported by grant
MTM2008-04699-C03-02.}
\and Gelasio Salazar
\thanks{Instituto de Física, Universidad Autónoma de San Luis Potosí, Mexico,
{\tt gsalazar@dec1.ifisica.uaslp.mx}}
}
\begin{document}

\date{}
\maketitle

\begin{abstract}
Let $S$ be a set of $r$ red points and $b=r+2\delta$ blue points in
general position in the plane, with $\delta\ge 0$. A line $\ell$
determined by them is {\em balanced} if in each open half-plane bounded by $\ell$ the
difference between the number of blue points and red points is
$\delta$. We show that every set $S$ as above has at least $r$
balanced lines. The main techniques in the proof are rotations and a
generalization, sliding rotations, introduced here.
\end{abstract}

\section{Introduction}
\label{section:introduction}

Let~$B$ and $R$ be, respectively, sets of blue and red points in the
plane, and let $S=B\cup R$ be in general position. Let $r=|R|$ and
$b=|B|=r+2\delta$, with $\delta \ge 0$. Furthermore, we are given
weights $\omega(p)=+1$ for $p\in B$ and $\omega(q)=-1$ for $q\in
R$. Given a halfplane~$H$, its weight is then defined as
$\omega(H)=\sum_{s\in S\cap H}\omega(s)$. Here and throughout this
paper, halfplanes are open unless otherwise stated.

\begin{definition}
A line~$\ell$ determined by two points of~$S$ is
\emph{balanced} if the two halfplanes it defines have weight
$\delta$. Observe that this implies that the two points of $S$
spanning $\ell$ have different 
colors.
\end{definition}

For $\delta=0$, we obtain the original result, as conjectured by George Baloglou,
and proved by Pach and Pinchasi via circular sequences:

\begin{theorem}[\cite{pach-pinchasi}]
\label{thm:pach-pinchasi}
Let $|R|=|B|=n$. Every set~$S$ as above determines at
least~$n$ balanced lines. This bound is tight.
\end{theorem}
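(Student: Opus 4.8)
The plan is to prove the lower bound by a rotation argument on oriented balanced lines and to establish tightness by an explicit convex configuration. For a directed line $\ell$ let $\omega^+(\ell)$ and $\omega^-(\ell)$ denote the weights of the open halfplanes to its left and to its right. Since $\omega(S)=0$ and, by the definition above, a balanced line is spanned by one red and one blue point whose weights cancel, a line is balanced exactly when $\omega^+(\ell)=\omega^-(\ell)=0$. Each balanced line thus gives rise to two \emph{directed} balanced lines, so it suffices to produce at least $2n$ directed balanced lines. First I would fix a generic starting balanced line, oriented so that its red endpoint lies ahead of its blue endpoint, and study how balance is preserved under rotation.

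The core is the \textbf{rotation operation}. Starting from a directed balanced line through a red point $p$ and a blue point $q$, I pivot the line about one of these two points and turn it counterclockwise until it first meets a new point $p'$ of $S$. During this pivoting the line passes through only one point, so $\omega^+$ and $\omega^-$ change by $\pm 1$ as points cross $\ell$; the key lemma to prove is that, choosing the pivot and the re-anchoring rule correctly, the sweep returns to a configuration with $\omega^+=\omega^-=0$ at a uniquely determined next balanced line. Because the two anchors of a balanced line always have opposite colors, the rule forces the rotation to alternate between anchoring at a red and at a blue point. This yields a well-defined successor map on the set of directed balanced lines, and since the map is invertible (rotating clockwise undoes it) it partitions the directed balanced lines into disjoint cyclic orbits.

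The lower bound then follows from a counting/parity argument over these orbits, and this is where the \textbf{main obstacle} lies. Over one full orbit the direction of the line turns by a positive integer multiple of $2\pi$, and I would relate this total turning to the number of point-crossings, charging each directed balanced line to a point of one fixed color (say red) that it is anchored at or that it sweeps past. The delicate part is to show that this charging is injective enough that the orbits together account for at least $2n$ directed balanced lines: one must rule out degenerate orbits that revisit the same geometric line and control the parity of crossings at each anchor so that every red point contributes at least once. Establishing that every red point is the pivot of at least one balanced rotation, equivalently that the charging argument never loses more than a factor of two, is the heart of the proof.

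For tightness I would exhibit a configuration attaining exactly $n$. Place $2n$ points in convex position, labelled $1,2,\dots,2n$ in convex order, coloring $1,\dots,n$ red and $n+1,\dots,2n$ blue. The line through points $a$ and $b$ with $a<b$ has the points of index strictly between $a$ and $b$ on one open side. Requiring $a$ red and $b$ blue, the strictly-between set contains $n-a$ red points and $b-1-n$ blue points, so it is balanced precisely when $b=2n+1-a$. This gives exactly one balanced line for each $a\in\{1,\dots,n\}$, namely the pairs $(1,2n),(2,2n-1),\dots,(n,n+1)$, which are $n$ distinct lines and no others, showing the bound $n$ cannot be improved.
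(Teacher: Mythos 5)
Your tightness construction is correct and is essentially the one the paper alludes to (a convex $2n$-gon with $R$ and $B$ separated by a line). The lower bound, however, has a genuine gap, and you have in fact located it yourself: the ``key lemma'' defining the successor map and the ``counting/parity argument over orbits'' are both left unproved, and they are precisely where all the difficulty of the theorem lives. Two concrete problems. First, the successor map is not well defined as you describe it: if you pivot a balanced line about one of its anchors until it first meets a new point of $S$, the resulting line is in general not balanced (the weight of a halfplane changes by $\pm 1$ at each crossing, and there is no reason it returns to $0$ after a single step). The correct local statement --- the one the paper proves as Lemma~\ref{l:0-1} --- is about a full $R^k$-rotation, i.e.\ a line kept with exactly $k$ red points to its right for all angles $t\in[0,2\pi]$: every transition of the right halfplane's weight between $\delta$ and $\delta+1$ passes through a balanced line. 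This is a statement about sign changes of a weight function along a closed curve of lines, not about a one-step successor of a given balanced line.

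Second, and more seriously, the charging argument you defer (``establishing that every red point is the pivot of at least one balanced rotation \dots\ is the heart of the proof'') is exactly the part that can fail and that the paper spends Sections~2 and~3 repairing. For each $k$ the $R^k$-rotation yields two balanced lines \emph{provided} its weight function actually crosses the threshold; but some $R^k$-rotations may be $\delta$-preserving (the weight stays strictly on one side for the whole revolution) and then contribute nothing directly. Handling that case is why the paper introduces sliding rotations, the waist, the minimal element $\Gamma$, the decomposition $R=F\cup G\cup H$, and Lemmas~\ref{lemma:F+H}--\ref{lemma:balanced_or_recharge_G_k}. Your orbit decomposition of directed balanced lines is a reasonable reformulation, but without an argument that rules out short or overlapping orbits and guarantees the injectivity of the charging, it does not yield the bound; as written, the proposal establishes only the tightness half of the theorem.
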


Tightness is shown, e.g., by placing~$S$ on a convex $2n$-gon in such a
way that~$R$ is separated from~$B$ by a straight line.

The general result was proved by Sharir and Welzl in
an indirect manner, via an equivalence with a very special case of
the Generalized Lower Bound Theorem. This motivated them to ask for
a more direct and simpler proof.

\begin{theorem}[\cite{sharir-welzl}]
\label{thm:sharir-welzl} Let~$B$ and $R$ be, respectively, sets of
blue and red points in the plane, and let $S=B\cup R$ be in general
position. Let $r=|R|$ and $b=|B|=r+2\delta$, with $\delta \ge 0$. The
number of lines that pass through a point in~$B$ and a point in~$R$,
and such that the two induced halfplanes have weight $\delta$ is at
least $r$.  This number is attained if~$R$ and~$B$ can be separated by
a line.
\end{theorem}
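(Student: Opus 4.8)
The plan is to work with directed balanced lines and reduce the problem to counting the positions of a continuously rotating line at which balance occurs. First I would record the basic reformulation: since $\omega(S)=b-r=2\delta$ and a balanced line passes through one blue and one red point (contributing $+1$ and $-1$), the two open half-planes it determines have total weight $2\delta$; hence a line through a blue--red pair is balanced if and only if its two open half-planes have equal weight, each equal to $\delta$. Orienting such a line gives a directed balanced line, and it suffices to count these with a fixed convention and divide by two. The strategy is to connect balanced lines to one another by rotations, so that they organize into a structure one can count.

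Second, I would make the rotation operation precise. Starting from a directed balanced line through a blue point $p$ and a red point $q$, fix $p$ as pivot and rotate the directed line about $p$ in a fixed sense; as it turns, $q$ leaves the line and the other points cross it one at a time, each crossing changing the weight of the left open half-plane by $\pm 1$ and that of the right by the opposite amount. I would track the resulting integer-valued weight function and locate the next angular position at which the rotating line again passes through a second point with both open half-planes of weight $\delta$. This defines a successor on directed balanced lines; the aim is to show that this map is a bijection, so that the directed balanced lines decompose into rotation cycles, and then to bound the total from below.

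Third---and this is the technical heart---I expect that for $\delta>0$ an ordinary rotation about a single pivot may fail to land on a balanced line, because the weight function can skip over the target value $\delta$ between consecutive point-crossings. To repair this I would introduce \emph{sliding rotations}: rather than keeping one point fixed throughout, one allows the pivot to slide to the newly encountered point and continue rotating, so that the line always passes through two points while the balance value is approached without being overshot. Verifying that sliding rotations are well defined, that they preserve and eventually restore the balance condition, and that they still yield a bijective successor, is where the main difficulty will lie.

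Finally, I would carry out the global count. Using the cycle structure produced by the (sliding) rotations, together with a half-turn continuous-rotation argument in which the weight difference of the two half-planes changes by $\pm 2$ at crossings and is negated after a rotation by $\pi$, an intermediate-value and parity argument should force at least $r$ distinct balanced positions. The delicate points will be ruling out double counting across cycles and confirming that the lower bound degrades to exactly $r$ on the linearly separable configurations, matching the tightness asserted in the statement.
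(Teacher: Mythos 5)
Your proposal is a plan rather than a proof, and the two places where the real work happens are both left open. First, the counting step: even if your successor map on directed balanced lines were shown to be a well-defined bijection, a decomposition into rotation cycles gives no lower bound on the number of balanced lines without controlling the number or the length of the cycles, and you offer no mechanism for that. The concluding appeal to ``an intermediate-value and parity argument'' only yields what the paper's Lemma~\ref{l:0-1} and Lemma~\ref{l:halving} already give: for each $k$, an $R^k$-rotation that attains values on both sides of $\delta$ must cross $\delta\rightsquigarrow\delta+1$ and back, producing two balanced lines (distinct across different $k$ because they have $k$ red points to the right). That argument alone proves the theorem only in the case where \emph{no} $R^k$- or $B^k$-rotation is $\delta$-preserving; the entire difficulty of the general case is that some rotation may stay strictly on one side of $\delta$ for all $t$ and contribute nothing.

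Second, your proposed repair does not address that difficulty. Letting ``the pivot slide to the newly encountered point'' is exactly what an ordinary $P^k$-rotation already does (the pivot changes whenever two points of $P$ lie on the line), so it cannot fix a rotation whose weight never reaches $\delta$. The paper's sliding rotations are a genuinely larger class of motions: they interleave rotation about a pivot with \emph{parallel translation} of the line, so the number of points to the right is allowed to change along the motion. The crux of the paper is then an extremal choice --- among all positively oriented, $\delta$-preserving sliding rotations, take one, $\Gamma$, of minimal waist --- followed by a partition of $R$ into the sets $F$ and $H$ outside the central region $\Gamma_t^-\cap\Gamma_{t+\pi}^-$ and $G$ inside it. The $F^k$- and $H^k$-rotations each yield a balanced line by an intermediate-value argument against $\Gamma$ (Lemma~\ref{lemma:F+H}), and the $G^k$-rotations are forced to have transitions precisely because otherwise one could splice them with $\Gamma$ to build a $\delta$-preserving sliding rotation of strictly smaller waist (Lemma~\ref{lemma:sign_change_G_k}), with a final accounting step (Lemma~\ref{lemma:balanced_or_recharge_G_k}) converting those transitions into new balanced lines. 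None of this structure --- the minimality argument, the $F,H,G$ partition, or the recharging of transitions --- is present in your outline, and without some substitute for it the bound of $r$ does not follow.
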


In this paper we give a simple proof of Theorem~\ref{thm:sharir-welzl}
using elementary geometric techniques. Therefore, via the results in~\cite{sharir-welzl},
we provide a geometric proof of the following
very special case of the Generalized Lower Bound Theorem:

Let $\mathcal{P}$ be a convex polytope which is the intersection of $d+4$
halfspaces in general position in $\mathbb{R}^d$. Let its edges be oriented
according to a generic linear function (edges are directed from smaller to larger
value; ``generic'' means that the function evaluates to distinct values at the
vertices of $\mathcal{P}$).

\begin{theorem}[\cite{sharir-welzl}]
The number of vertices with $\lceil\tfrac{d}{2}\rceil - 1$ outgoing edges is at most
the number of vertices with $\lceil\tfrac{d}{2}\rceil$ outgoing edges.
\end{theorem}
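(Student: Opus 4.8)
My plan is to deduce the statement from the balanced-lines result, Theorem~\ref{thm:sharir-welzl}, through the chain of equivalences established in \cite{sharir-welzl}; I will sketch the reduction so that it is transparent. \emph{First}, I would dualize. A simple $d$-polytope $\mathcal{P}$ arising as the intersection of $d+4$ generic halfspaces is polar to a simplicial $d$-polytope $\mathcal{P}^*$ with $d+4$ vertices, and a generic linear function is a Morse-type function whose vertex statistics recover the $h$-vector of $\mathcal{P}^*$. Concretely, the number of vertices of $\mathcal{P}$ with exactly $i$ outgoing edges equals $h_{d-i}$, which by the Dehn--Sommerville relations $h_i = h_{d-i}$ also equals $h_i$. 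Writing $k=\lceil d/2\rceil$, the asserted inequality therefore becomes the single \emph{middle} inequality $h_{k-1}\le h_k$ of the $g$-theorem. For $d$ odd this is the identity $h_{(d-1)/2}=h_{(d+1)/2}$ coming directly from Dehn--Sommerville, so the real content is the case $d$ even, where it reads $g_{d/2}=h_{d/2}-h_{d/2-1}\ge 0$.

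\emph{Second}, I would pass to Gale duality, which is tailored to polytopes with few vertices. The $d+4$ vertices of $\mathcal{P}^*$ have a Gale transform consisting of $d+4$ vectors in $\mathbb{R}^{(d+4)-d-1}=\mathbb{R}^3$; normalizing and choosing an affine chart turns this into an \emph{affine Gale diagram}, a planar configuration $S$ of $d+4$ two-colored points, the colors recording the signs of the Gale vectors. General position of the defining halfspaces guarantees that $S$ is in general position, and the split into $r$ red and $b$ blue points satisfies $r+b=d+4$ and $b-r=2\delta$, so that $S$ is exactly an instance of the configurations of Theorem~\ref{thm:sharir-welzl}, with $r=(d+4)/2-\delta$.

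\emph{Third}, I would invoke the dictionary of \cite{sharir-welzl}: under the Gale correspondence the faces of $\mathcal{P}^*$ are the positively spanning complements in the diagram, and a direct count shows that $g_{d/2}(\mathcal{P}^*)$ is precisely the excess of the number of balanced lines of $S$ over $r$, the value attained when $R$ and $B$ are linearly separated (the stacked, lower-bound polytope). Hence $g_{d/2}\ge 0$ is \emph{equivalent} to the assertion that $S$ has at least $r$ balanced lines. Applying Theorem~\ref{thm:sharir-welzl} then yields $g_{d/2}\ge 0$ and, undoing the dualities, the claimed inequality between the two vertex counts.

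The step I expect to be the genuine obstacle is the third one: making the Gale-diagram bookkeeping precise, that is, verifying that the GLBT defect $g_{d/2}$ translates exactly into $(\text{balanced lines})-r$ and that the linearly separated configuration corresponds to the extremal polytope. This translation is the heart of \cite{sharir-welzl}, so rather than reprove it I would cite it and concentrate the new work on the balanced-lines estimate itself, which the present paper supplies by elementary rotation arguments.
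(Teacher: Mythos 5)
Your proposal follows exactly the route the paper takes: the paper does not prove this statement directly, but obtains it by combining its new elementary proof of Theorem~\ref{thm:sharir-welzl} with the equivalence (via polarity and affine Gale diagrams) already established in \cite{sharir-welzl}, which is precisely the reduction you sketch and then cite. Your additional bookkeeping with the $h$-vector, Dehn--Sommerville, and the identification of $g_{d/2}$ with the excess of balanced lines over $r$ is consistent with that equivalence, so the argument is sound and essentially identical in structure to the paper's.
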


All proofs in this paper can be easily translated
to the more general setting of circular sequences (see~\cite{orsarxiv}).

\section{Geometric tools}
\label{section:first_tool}

We assume that coordinate axes are chosen in such a way that all points
have different abscissa. The tools we use are inspired in the rotational movement introduced by
Erd\H{o}s et al.~\cite{elss}.

\begin{definition}
Let $P\subseteq S$. A \emph{$P^k$-rotation} is a family of directed lines $P^k_t$, where $t\in[0,2\pi]$
is the angle measured from the vertical axis,
defined as follows: $P^k_0$ contains a single point of~$P$, and as $t$ increases, it rotates
counterclockwise in such a way that
\begin{itemize}
\item[(i)] $|P\cap P^k_t|=1$ except for a finite number of events,
when $|P\cap P^k_t|=2$; and
\item[(ii)] whenever $|P\cap P^k_t|=1$, there are exactly $k$ points of~$P$ to the right of~$P^k_t$.
\end{itemize}
The common point $P\cap P^k_t=\{p\}$ is called the \emph{pivot}, and it changes precisely when $|P\cap P^k_t|=2$.
Observe that $P^k_0=P^k_{2\pi}$.
\end{definition}

\begin{definition}\label{def:sign_preserving}
Let $\ell^+$ and $\ell^-$ denote, respectively, the halfplanes to
the right and to the left of $\ell$. Let $\omega(\ell)$ be the weight
of $\ell^+$. Given a $P^k$-rotation, we say that
$P^k\geq \delta$ if $\omega({P^k_t})\geq \delta$ for every $t\in [0,2\pi]$,
and similarly for the rest of inequalities. A rotation $B^k$ is
\emph{$\delta$-preserving} if either $B^k\geq \delta$ or $B^k< \delta$.
Symmetrically, $R^k$ is \emph{$\delta$-preserving} if either $R^k\leq \delta$
or $R^k > \delta$.
\end{definition}

\begin{lemma}\label{l:0-1}
In an $R^k$-rotation, transitions $\delta\rightsquigarrow \delta+1$
and $\delta+1\rightsquigarrow \delta$ in $\omega({R^k_t})$ are
always through a balanced line. In a $B^k$-rotation, transitions
$\delta\rightsquigarrow \delta-1$ and $\delta-1\rightsquigarrow
\delta$ in $\omega({B^k_t})$ are always through a balanced line.
\end{lemma}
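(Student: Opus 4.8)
The plan is to track how the right-halfplane weight $\omega(R^k_t)=\omega(\ell^+)$ evolves during the rotation and to show that the only moments at which it passes between the values $\delta$ and $\delta+1$ are moments at which the line is balanced. First I would observe that $\omega(R^k_t)$ is piecewise constant in $t$ and can only change when a point crosses the line. A red point crossing the line corresponds to one of the events where $|R\cap R^k_t|=2$, i.e.\ a pivot change; by condition~(ii) of the definition there are exactly $k$ red points to the right of $R^k_t$ immediately before and immediately after such an event, and by general position no blue point lies on the line at that instant, so the set of points contributing to $\ell^+$ is unchanged and $\omega$ does not move across a pivot change. Hence $\omega(R^k_t)$ changes (by $\pm1$) precisely when a blue point $q$ crosses the line, and at that instant the line passes through the current red pivot $p$ together with $q$. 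In particular the two spanning points $p$ and $q$ already have different colors, which is the necessary condition for a balanced line.

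The heart of the argument is then a short count at the instant of a transition between $\delta$ and $\delta+1$. At such an instant the crossing blue point $q$ lies on the line and is therefore counted in neither halfplane; the right halfplane still contains exactly $k$ red points, while the number of blue points it contains is $k+\delta$ (this is exactly the ``lower'' count of the two sides of the transition, since the toggling blue point $q$ is momentarily on the line). Thus $\omega(\ell^+)=(k+\delta)-k=\delta$, for both the $\delta\rightsquigarrow\delta+1$ and the $\delta+1\rightsquigarrow\delta$ directions. To upgrade this to balance I would use the global identity $\omega(\ell^+)+\omega(\ell^-)+\omega(\{p,q\})=\omega(S)=b-r=2\delta$; since $p$ is red and $q$ is blue, $\omega(\{p,q\})=(-1)+(+1)=0$, and therefore $\omega(\ell^-)=2\delta-\delta=\delta$ as well. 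Both halfplanes having weight $\delta$, the line $\ell$ is balanced, which proves the $R^k$ case.

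The $B^k$ case is handled symmetrically by exchanging the roles of the two colors: the pivot is now always blue, so $\omega(B^k_t)$ is invariant under the blue pivot changes and moves only when a red point crosses, putting a blue pivot and a red point on the line. The same instantaneous count shows $\omega(\ell^+)=\delta$ at a transition between $\delta$ and $\delta-1$, and the identity $\omega(\ell^+)+\omega(\ell^-)=2\delta$ (again $\omega(\{p,q\})=0$) gives $\omega(\ell^-)=\delta$, so $\ell$ is balanced. I expect the only delicate point to be the bookkeeping around pivot changes and the crossing instant, namely verifying cleanly that weight is preserved across events where two like-colored points lie on the line and that the on-line crossing point is excluded from both halfplanes; once these conventions are pinned down, the balance follows immediately from the weight-conservation identity.
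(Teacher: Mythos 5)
Your proposal is correct and follows essentially the same route as the paper: weight is preserved across red pivot changes, so transitions between $\delta$ and $\delta+1$ can only occur when a blue point meets the line through the red pivot, and at that instant the line is balanced. The only difference is that you make explicit the counting ($k$ red and $k+\delta$ blue points on the right, plus the conservation identity $\omega(\ell^+)+\omega(\ell^-)=2\delta$) that the paper delegates to its figure.
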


\begin{proof}
When a red point is found during an $R^k$-rotation, the weight of
the halfplane is preserved because the pivot point changes.
Therefore, the change $\delta\rightsquigarrow \delta+1$ happens when
a blue point is found in the head of~$R^k_t$
(Figure~\ref{fig:B_k_rotation}, left), while
$\delta+1\rightsquigarrow \delta$ happens when a blue point is found
in the tail of~$R^k_t$ (Figure~\ref{fig:B_k_rotation}, right). In
both cases, the points define a balanced line. For a $B^k$-rotation,
the proof is identical.
\end{proof}

\begin{figure}[htb]
    \begin{center}
    \includegraphics{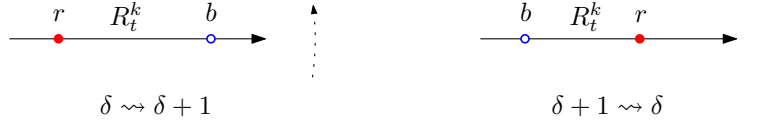}
    \caption{Transitions in an $R^k$-rotation are always through a balanced line.}
    \label{fig:B_k_rotation}
    \end{center}
\end{figure}

Claim 8.1 in \cite{pach-pinchasi} has now a more direct proof:
\begin{lemma}\label{l:halving}
If $r$ is odd, there exists a balanced line which is a halving line of $S$.
\end{lemma}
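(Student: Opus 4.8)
The plan is to run a single $R^k$-rotation with $k=(r-1)/2$, which is an integer \emph{precisely} because $r$ is odd, and to force the weight to cross the value $\delta$. In such a rotation the pivot is always red, so by definition exactly $(r-1)/2$ red points lie to the right of $R^k_t$ and, since one red point is the pivot, exactly $(r-1)/2$ lie to the left. Consequently any balanced line detected during this rotation automatically carries $(r-1)/2$ red points on each side; counting colours, each open halfplane then holds $(r-1)/2$ reds and $(r-1)/2+\delta$ blues, that is, $r+\delta-1=(|S|-2)/2$ points. So such a balanced line is automatically a halving line of $S$, and it suffices to exhibit \emph{one} balanced line in the $R^{(r-1)/2}$-rotation.

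Write $f(t)=\omega(R^k_t)$ for the weight of $(R^k_t)^+$. At a generic angle only the red pivot lies on the line, so $\omega\big((R^k_t)^+\big)+\omega\big((R^k_t)^-\big)=\omega(S)-(-1)=2\delta+1$, using $\omega(S)=b-r=2\delta$. The key step is an antipodal symmetry: writing $\bar\ell$ for the directed line $\ell$ with its direction reversed, I claim $R^k_{t+\pi}=\overline{R^k_t}$. Indeed, at a generic angle the rotation line is unique (sweeping a line of fixed direction, the pivot must be the $(k+1)$-st red point from the right), while $\overline{R^k_t}$ has direction $t+\pi$, passes through the same red pivot, and has $(r-1)-k=(r-1)/2=k$ red points to its right. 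This equality is exactly where $k=(r-1)/2$, hence the oddness of $r$, is used. Reversing a line swaps its two halfplanes, so $f(t+\pi)=\omega\big((R^k_t)^-\big)=2\delta+1-f(t)$, giving $f(t)+f(t+\pi)=2\delta+1$ for every generic $t$.

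Since $2\delta+1$ is odd, $f(t)$ and $f(t+\pi)$ cannot both exceed $\delta$ nor both be at most $\delta$; one is $\le\delta$ and the other is $\ge\delta+1$. Thus $f$ is neither constantly $\ge\delta+1$ nor constantly $\le\delta$ over $[0,2\pi]$. Because $f$ is integer valued and, by the mechanism in Lemma~\ref{l:0-1}, is preserved at red events and changes by exactly $\pm1$ at blue events, it must make a step $\delta\rightsquigarrow\delta+1$ or $\delta+1\rightsquigarrow\delta$ somewhere. By Lemma~\ref{l:0-1} such a step happens at a balanced line, which by the first paragraph is a halving line of $S$, completing the proof.

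The main obstacle is establishing the antipodal identity $R^k_{t+\pi}=\overline{R^k_t}$ cleanly, since it rests on the uniqueness of the rotation line at a generic angle together with the arithmetic $(r-1)-k=k$; once this is in place the parity of $2\delta+1$ does all the work, and the conclusion follows from a discrete intermediate-value argument feeding into Lemma~\ref{l:0-1}. I would take care only to confirm that the relevant transition occurs at a blue event (automatic, as $f$ is constant across red events), so that the balanced line genuinely passes through one red and one blue point as required by the definition.
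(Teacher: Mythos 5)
Your proof is correct and follows essentially the same route as the paper: take the $R^{\lfloor r/2\rfloor}$-rotation, use the antipodal relation $R^k_{t+\pi}=\overline{R^k_t}$ (which the paper invokes implicitly via ``if $R^k_0\le\delta$ then $R^k_\pi>\delta$'') to force a $\delta\leftrightarrow\delta+1$ transition, and apply Lemma~\ref{l:0-1}. You merely spell out two steps the paper leaves implicit, namely the identity $f(t)+f(t+\pi)=2\delta+1$ and the verification that the resulting balanced line is a halving line.
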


\begin{proof}
Let $k=\lfloor\tfrac{r}{2}\rfloor$ and consider an $R^k$-rotation.
If $R^k_0\leq\delta$, then $R^k_{\pi} > \delta$, and conversely.
Therefore, there exist transitions $\delta\rightsquigarrow \delta+1$
and $\delta+1\rightsquigarrow \delta$ in $\omega({R^k_t})$ which,
from Lemma~\ref{l:0-1} are always through a balanced line. Observe
that both transitions are through the same balanced line, with
angles $t_0$ and $t_0+\pi$.
\end{proof}

\begin{remark}\label{r:rem1}
Let us observe that Theorem~1.4 in~\cite{pach-pinchasi}, which states that Theorem~\ref{thm:pach-pinchasi}
is true when~$R$ and~$B$ are separated by a line~$\ell$, has now an easier proof:
if we start $R^k$-rotations with a line parallel to $\ell$, for each $k$ there exist exactly one
transition $\delta\rightsquigarrow \delta+1$ and one transition $\delta+1\rightsquigarrow \delta$
which, from Lemma~\ref{l:0-1}, correspond always to a balanced line.
%it is obvious that no $R^k$-rotation
%is $\delta$-preserving. Therefore, there exist transitions $\delta\rightsquigarrow \delta+1$ and
%$\delta+1\rightsquigarrow \delta$ in $\omega({R^k_t})$ which, from
%Lemma~\ref{l:0-1}, correspond always to a balanced line.
If $r$ is even, there are 2 balanced lines for $k=0,\ldots,\tfrac{r}{2}-1$, for a total
of $r$ balanced lines, while if $r$ is odd there are 2 balanced lines
for $k=0,\ldots,\lfloor\tfrac{r}{2}\rfloor-1$ and 1 balanced line for
$k=\lfloor\tfrac{r}{2}\rfloor$.
\end{remark}

\begin{remark}
Lemmas~\ref{l:0-1} and~\ref{l:halving} conclude the proof of
Theorem~\ref{thm:pach-pinchasi} if no $R^k$-rotation is $\delta$-preserving
or if no $B^k$-rotation (with $k\geq\delta)$ is $\delta$-preserving.
%or if all $R^k$-rotations are $\delta$-preserving for $k\geq\delta$.
Hence, in the following we assume that there exists either at least one $R^k$-rotation
or one $B^k$-rotation (with $k\geq\delta)$ which
is $\delta$-preserving.
\end{remark}

\begin{lemma}\label{l:BR}
Let $0\leq j \leq \lfloor\tfrac{r}{2}\rfloor$. If $R^j> \delta$ then
$B^{j+\delta}\geq \delta$, while if $B^{j+\delta}<\delta$ then $R^j \leq \delta$.
\end{lemma}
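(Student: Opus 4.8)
The plan is to reduce both implications to a single monotone sweeping argument. For a directed line $\ell$ I will write $r(\ell)$ and $b(\ell)$ for the numbers of red and blue points in $\ell^+$, so that $\omega(\ell)=b(\ell)-r(\ell)$. On a line $R^j_t$ the pivot is red and exactly $j$ red points lie to the right, so $r(R^j_t)=j$ and $\omega(R^j_t)=b(R^j_t)-j$; thus $R^j>\delta$ says $b(R^j_t)\ge j+\delta+1$ for all $t$. Symmetrically $b(B^{j+\delta}_t)=j+\delta$ and $\omega(B^{j+\delta}_t)=(j+\delta)-r(B^{j+\delta}_t)$, so $B^{j+\delta}\ge\delta$ means $r\le j$ everywhere, while $B^{j+\delta}<\delta$ means $r\ge j+1$ everywhere. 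With this dictionary the two claims become statements relating the red count on $R^j$-lines to the blue count on $B^{j+\delta}$-lines.

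The tool I would use is to translate a directed line $\ell$ rigidly toward its right halfplane $\ell^+$. Under this motion both $r(\cdot)$ and $b(\cdot)$ are non-increasing, falling from their initial values to $0$, and each drops by exactly one at the moment $\ell$ passes through a red, resp.\ blue, point. If the direction of $\ell$ is generic, so that no two points of $S$ span a segment parallel to it, these crossings happen one at a time; at the instant a red point is crossed $\ell$ passes through exactly that red point, and if exactly $j$ red points then lie to its right, $\ell$ is a line of the $R^j$-rotation (and symmetrically for blue and $B^{j+\delta}$). Since the counts are constant on each open interval between consecutive events of a rotation, I can always replace an offending rotation line by a nearby rigid line of generic direction carrying the same counts, which is what legitimizes the sweep; this bookkeeping is the step I expect to need the most care.

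For the first implication I would argue by contrapositive, assuming that $B^{j+\delta}\ge\delta$ fails. Then some line has $b=j+\delta$ and $\omega<\delta$, hence $r\ge j+1$; fix such a line $\ell_0$ of generic direction. Translating $\ell_0$ into $\ell_0^+$, the value $r(\cdot)$ decreases from at least $j+1$ down to $0$, so there is a first instant $\ell_1$ at which a red point is crossed and $r$ drops to $j$. Then $\ell_1$ is an $R^j$-rotation line, while $b(\ell_1)\le b(\ell_0)=j+\delta$ because $b$ only decreased; hence $\omega(\ell_1)=b(\ell_1)-j\le\delta$, which shows that $R^j>\delta$ cannot hold.

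The second implication is the mirror argument, run directly. Let $\ell$ be any $R^j$-rotation line and suppose for contradiction that $\omega(\ell)>\delta$, i.e.\ $b(\ell)\ge j+\delta+1$. Translating a generic-direction copy into $\ell^+$, the value $b(\cdot)$ falls from at least $j+\delta+1$ to $0$, so at a first instant $\ell_1$ a blue point is crossed and $b$ becomes $j+\delta$; thus $\ell_1$ is a $B^{j+\delta}$-rotation line, and $r(\ell_1)\le r(\ell)=j$ forces $\omega(\ell_1)=(j+\delta)-r(\ell_1)\ge\delta$, contradicting $B^{j+\delta}<\delta$. Hence every $R^j$-rotation line has $\omega\le\delta$, that is $R^j\le\delta$, completing the lemma.
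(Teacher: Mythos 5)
Your proof is correct and is essentially the paper's argument in dynamic form: the paper fixes an angle $t_0$ and observes directly that, since $(R^j_{t_0})^+$ contains more than $j+\delta$ blue points, the parallel line $B^{j+\delta}_{t_0}$ must lie inside that halfplane and hence has at most $j$ red points to its right, giving $\omega(B^{j+\delta}_{t_0})\ge\delta$. Your translation sweep establishes exactly this relative position of the two parallel rotation lines (with some extra, harmless bookkeeping about generic directions and a contrapositive phrasing of the first implication), so the two proofs coincide in substance.
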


\begin{proof}
  Consider the line $R^j_{t_0}$. The halfplane $(R^j_{t_0})^+$ contains $j$ red points and
  $b>j+\delta$ blue points. Therefore, the line $B^{j+\delta}_{t_0}$ is to the right
  of $(R^j_{t_0})^+$ and contains at most $j$ red points. Then, $\omega(B^{j+\delta}_{t_0})\geq \delta$.
  The proof of the second statement is analogous.
\end{proof}

The next definition generalizes the concept of $P^k$-rotation in two different ways:
parallel movements are permitted and the number of points to the right of the line can change.

\begin{definition}
\label{def:sliding_rotation}
A \emph{$P$-sliding rotation} consists in moving a directed line~$\ell$
continuously, starting with an $\ell_0$ which contains a single point $p_0\in P$,
and composing rotation around a point of $P$ (the pivot) and parallel displacement
(in either direction) until the next point of~$P$ is found.
Furthermore, after a $2\pi$ rotation is
completed, the line $\ell_0$ must be reached again.
\end{definition}

This movement is clearly a continuous curve in the space of lines in the plane.
For instance, if a line is parameterized as a point in $S^1\times\mathbb{R}$, a
$P$-sliding rotation describes a (non-strictly) angular-wise monotone curve, with
vertical segments corresponding to parallel displacements.

Let $\Sigma$ be a $P$-sliding rotation.
Let us denote by $\Sigma_t$ the line with
angle $t$ with respect to the vertical axis defined as follows: if there is no parallel displacement
at angle~$t$, then $\Sigma_t$ denotes the corresponding line. Otherwise, it denotes
the leftmost line corresponding to angle~$t$.

\begin{definition}
A $P$-sliding rotation $\Sigma$ is \emph{positively oriented} if $\Sigma_{t+\pi}$
 is to the left of~$\Sigma_t$ for all~$t\in[0,\pi)$.
\end{definition}

That $\Sigma\geq \delta$, as well as the rest of inequalities, is
defined exactly as in Definition~\ref{def:sign_preserving}.
Similarly, a $B$-sliding rotation $\Sigma$ is $\delta$-preserving if
$\Sigma\geq \delta$, while an $R$-sliding rotation is
$\delta$-preserving if $\Sigma\leq \delta$. The following definition
is the crux of the rest of the paper.

\begin{definition}
\label{def:waist-gamma}
Let~$\mathcal{S}$ be the set of all positively oriented, $\delta$-preserving $B$-sliding rotations
 and $R$-sliding rotations.
The \emph{waist} of a $P$-sliding rotation $\Sigma\in\mathcal{S}$ is
$$
\min_{t\in [0,\pi]} |P\cap \Sigma_t^- \cap \Sigma_{t+\pi}^-|.
$$
We denote by~$\Gamma$ the sliding rotation of~$\mathcal{S}$ with the smallest waist.
\end{definition}

Note that the set~$\mathcal{S}$ is non-empty because we have assumed that there exist
$\delta$-preserving $B^k$- or $R^k$-rotations, which are a particular type of
sliding rotations. Furthermore, the waist takes only a finite number of values,
so it has a minimum. If the minimum is
not unique, we can pick any of the sliding rotations achieving it.

\section{Main result}
\label{sec:main_result}

Assume that $\Gamma$ is a $\delta$-preserving $R$-sliding rotation (i.e. $\Gamma\leq\delta$).
In this case, we will manage to prove that there exist at least $r$ balanced lines.
For the case of $\Gamma$ being a $\delta$-preserving $B$-sliding rotation, the
same arguments would show that there exist at least $b$ balanced lines.

\begin{lemma}
\label{lemma:F+H} Let $\Gamma_0$ and $\Gamma_{\pi}$ be the lines
achieving the waist of $\Gamma$, let $\overline{\Gamma}_0^+$ be the
closed halfplane to the right of $\Gamma_0$ and let
$F=R\cap\overline{\Gamma}_0^+$. For every $k\in\{0,\ldots,|F|-1\}$,
during an $F^k$-rotation a balanced line is found. Similarly, let
$H=R\cap\overline{\Gamma}_{\pi}^+$. For every
$k\in\{0,\ldots,|H|-1\}$, during an $H^k$-rotation a balanced line
is found.
\end{lemma}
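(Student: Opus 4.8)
The plan is to argue by contradiction, exploiting the minimality of the waist of $\Gamma$. Fix $k\in\{0,\dots,|F|-1\}$ and suppose that no balanced line occurs during the $F^k$-rotation. First I would record how the weight $\omega(F^k_t)$ can change. Since $F\subseteq R$, the pivot is always red, so every pivot change leaves $\omega$ unchanged, and $\omega$ moves only when a point of $S\setminus F$ crosses $F^k_t$. A blue crossing at a transition $\delta\rightsquigarrow\delta+1$ or $\delta+1\rightsquigarrow\delta$ produces a bichromatic line of weight $\delta$, that is, a balanced line, exactly as in the proof of Lemma~\ref{l:0-1}. The reason Lemma~\ref{l:0-1} does not apply verbatim is that the red points of $R\setminus F$ are \emph{not} pivots of the $F^k$-rotation, so when one of them crosses $F^k_t$ it changes $\omega$ through a monochromatic red--red line. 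Hence the assumption ``no balanced line'' does not say that $\omega$ avoids $\delta$; it says that $\omega$ crosses between $\delta$ and $\delta+1$ only at crossings of points of $R\setminus F$, never at a blue point.

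Second, I would promote the $F^k$-rotation to a competitor in $\mathcal{S}$, namely a positively oriented, $\delta$-preserving $R$-sliding rotation $\Sigma$. The lines $F^k_t$ already pivot on red points of $F\subseteq R$; at each angle where a point of $R\setminus F$ would cross $F^k_t$ and push $\omega$ above $\delta$, I would instead insert a parallel displacement (allowed in a sliding rotation) that keeps that red point on the right, so that $\Sigma\le\delta$ throughout and no balanced line is ever traversed. By the first step this is possible precisely because, under our assumption, every threshold crossing of $F^k_t$ is through a point of $R\setminus F$ rather than a blue point. The outcome is a sliding rotation $\Sigma\in\mathcal{S}$ whose lines carry at most $|F|-1$ points of $F$ on their right.

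Finally, I would compare waists. Because $F$ is exactly $R\cap\overline{\Gamma}_0^+$ and $k\le|F|-1$, at least one point of $F$ is always kept to the left of $\Sigma_t$; combined with the fact that all of $F$ lies in the cap $\overline{\Gamma}_0^+$, I would show that the strip $\Sigma_t^-\cap\Sigma_{t+\pi}^-$ realizing the waist of $\Sigma$ contains strictly fewer red points than the strip $\Gamma_0^-\cap\Gamma_\pi^-$ realizing the waist of $\Gamma$. This contradicts the choice of $\Gamma$ as the element of $\mathcal{S}$ of smallest waist, so a balanced line must occur in the $F^k$-rotation after all. The statement for $H$ is symmetric, with $\Gamma_\pi$ in place of $\Gamma_0$. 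The hard part will be this waist comparison together with the displacement construction of the second paragraph: one must track carefully where the red points of $R\setminus F$ fall relative to the moving strip, since these are precisely the points that both obstruct a direct appeal to Lemma~\ref{l:0-1} and control whether the new waist genuinely drops below that of $\Gamma$.
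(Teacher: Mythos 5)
There is a genuine gap, and it sits exactly where you flagged it: the waist comparison in your final step does not hold, and I do not see how to repair it. The waist of $\Gamma$ equals $|G|=|R\smallsetminus(F\cup H)|$, the number of red points strictly inside the strip $\Gamma_0^-\cap\Gamma_\pi^-$. Your candidate $\Sigma$ is built from the $F^k$-rotation, whose lines live on the \emph{other} side of $\Gamma_0$: at $t=0$ the region $\Sigma_0^-\cap\Sigma_\pi^-$ is the strip between $F^k_0$ and $F^k_\pi$, which is contained in $\overline{\Gamma}_0^+$ and contains on the order of $|F|-2k-2$ points of $F$ (and at other angles there is no control either). This quantity bears no relation to $|G|$ and can easily exceed it, so the minimality of the waist of $\Gamma$ yields no contradiction. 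In addition, two other steps of your construction are unestablished: that the inserted parallel displacements can always be realized so that $\Sigma\le\delta$ while still returning to red pivots as the definition of a sliding rotation requires, and that the resulting $\Sigma$ is positively oriented (membership in $\mathcal{S}$ requires both).

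The paper's proof is direct and does not invoke the minimality of the waist at all; that minimality is used only later, for the $G^k$-rotations, whose lines do stay inside the central region, so that concatenating them with $\Gamma$ genuinely shrinks the waist. For the present lemma one only needs that $\Gamma\le\delta$ and that $\Gamma$ is positively oriented: since $F\subseteq\overline{\Gamma}_0^+$, the line $F^k_0$ starts to the right of $\Gamma_0$ and $F^k_\pi$ ends to the left of $\Gamma_\pi$, so there are $t_1,t_2\in[0,\pi]$ with $F^k_{t_1}=\Gamma_{t_1}$ (hence $\omega(F^k_{t_1})\le\delta$) and with $F^k_{t_2}$ equal to $\Gamma_{t_2+\pi}$ with reversed orientation (hence $\omega(F^k_{t_2})\ge\delta$). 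This forces a transition $\delta\rightsquigarrow\delta+1$ within $[0,\pi]$, apart from the boundary case in which a balanced line already occurs at $t_2$. Finally, because $R\smallsetminus F\subset\Gamma_0^-$, every point of $R\smallsetminus F$ is met by the head of the line during the half-turn $[0,\pi]$ and therefore can only \emph{decrease} $\omega$; so the forced increase $\delta\rightsquigarrow\delta+1$ must come from a blue point met by the head, which yields a balanced line. Your first paragraph correctly identifies the obstruction (red points of $R\smallsetminus F$ crossing the line without a pivot change), but the way to neutralize it is this head/tail observation restricted to $[0,\pi]$, not the construction of a competitor for $\Gamma$.
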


\begin{proof}
Figure~\ref{fig:fk} illustrates the situation.
On the one hand, $F^k_0$ is to the right of~$\Gamma_0$ and,
since~$\Gamma$ is positively oriented, $F^k_{\pi}$ is to the left of~$\Gamma_{\pi}$.
This implies that there is
a~$t_1\in[0,\pi]$ such that $F^k_{t_1}=\Gamma_{t_1}$ and therefore
$\omega({F^k_{t_1}})\leq \delta$. On the other hand, $F^k_0$ is to the
left of~$\Gamma_{\pi}$ and $F^k_{\pi}$ is to the right
of~$\Gamma_0$, therefore, there exists a~$t_2\in[0,\pi]$ such that
$F^k_{t_2}$ and $\Gamma_{t_2+\pi}$ are the same line with opposite
directions. Since $\omega(\Gamma_{t_2+\pi})\leq \delta$, then
$\omega({F^k_{t_2}})\geq \delta$. If $\omega(\Gamma_{t_2+\pi}) = \delta$
and the line contains a blue point, then it is a balanced line found
in a transition $\delta\rightsquigarrow \delta +1$. Otherwise,
$\omega({F^k_{t_2}})>\delta$ and hence a transition
$\delta\rightsquigarrow \delta+1$ has occurred for a $t\in (t_1,t_2)$.

Now, observe that $R\smallsetminus F\subset {\Gamma_0}^-$. Hence, in the
$F^k$-rotation for~$t\in[0,\pi]$, all the points in~$R\smallsetminus F$
are found by the head of the line. This implies that a change
$\delta\rightsquigarrow\delta+1$ in the weight of the right halfplane can only
occur when a blue point is found in the head of the ray (as in
Figure~\ref{fig:B_k_rotation}, left), hence defining a balanced line. The
proof for~$H$ is identical. %%
\end{proof}

\begin{figure}%[b]
    \begin{center}
    \includegraphics{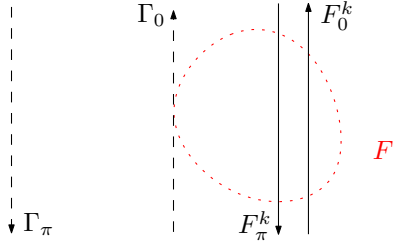}
    \caption{Illustration of the proof of Lemma~\ref{lemma:F+H}.}
    \label{fig:fk}
    \end{center}
\end{figure}

Before moving on, let us point out that the~$|F|+|H|$ balanced lines given by
Lemma~\ref{lemma:F+H} are different, because they have exactly~$k$ points of~$F$,
respectively~$H$, to the right. Let now $C^{\Gamma}_t$ be the \emph{central region}
defined by the sliding rotation~$\Gamma$ at instant~$t$, defined as $C^{\Gamma}_t=\Gamma_t^- \cap \Gamma_{t+\pi}^-$.
Observe that, for the corresponding~$t$, the transitions $\delta\rightsquigarrow\delta+1$ in the proof of
Lemma~\ref{lemma:F+H} correspond to balanced lines inside or in the boundary of the central region.

\begin{lemma}
\label{lemma:sign_change_G_k} Let $G=R\smallsetminus(F\cup H)$. For
$k\in\{0,\ldots, \lceil |G|/2 \rceil -1\}$, every $G^k$-rotation has
at least two transitions between $\delta$ and $\delta+1$, which
correspond to lines inside or in the boundary of the central
region., i.e., for the corresponding~$t$, $G^k_t\in C^{\Gamma}_t$.
\end{lemma}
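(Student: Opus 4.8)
The first thing I would do is pin down what $G$ really is. Since $F=R\cap\overline{\Gamma}_0^+$ and $H=R\cap\overline{\Gamma}_\pi^+$, a red point lies in $G=R\smallsetminus(F\cup H)$ exactly when it is strictly to the left of both $\Gamma_0$ and $\Gamma_\pi$; that is, $G=R\cap C^\Gamma_0$, so that $|G|$ equals the waist of $\Gamma$, and all of $G$ sits in the strip $C^\Gamma_0$ which contains no point of $F\cup H$. Two elementary facts will drive the argument. First, since the total weight of $S$ is $b-r=2\delta$, reversing the orientation of a line $\ell$ (free of points of~$S$) replaces $\omega(\ell)$ by $2\delta-\omega(\ell)$. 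Second, from $\Gamma\leq\delta$ we get a bracketing along the $G^k$-rotation: whenever $G^k_t$ coincides with the right boundary $\Gamma_t$ we have $\omega(G^k_t)=\omega(\Gamma_t)\leq\delta$, and whenever $G^k_t$ is the reverse of the left boundary $\Gamma_{t+\pi}$ we have $\omega(G^k_t)=2\delta-\omega(\Gamma_{t+\pi})\geq\delta$. Note also that $G^k_0$ starts inside $\overline{C^\Gamma_0}$, since it keeps $k\le\lceil|G|/2\rceil-1$ points of $G$ to its right.

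The overall strategy is a contradiction with the minimality of the waist in Definition~\ref{def:waist-gamma}. The plan is to assume that for some $k$ in the stated range the $G^k$-rotation has at most one threshold crossing located in the central region, and to produce from this a positively oriented, $\delta$-preserving $R$-sliding rotation in $\mathcal{S}$ whose waist is strictly smaller than $|G|$, which is impossible. The absence of the second crossing is exactly what lets one push the right boundary of $\Gamma$ inward, along the $G^k$-cut, on an angular interval where $\omega(G^k_t)$ stays $\le\delta$, thereby expelling red points of $G$ from the central region while preserving the inequality $\Sigma\le\delta$.

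To see that the target waist is below $|G|$, I would compute at $t=0$: the line $G^k_0$ cuts off the $k$ rightmost points of $G$ and its antipode cuts off the $k$ leftmost ones, so the region between them contains only the middle $|G|-2k$ points of $G$ and, lying inside $C^\Gamma_0$, no point of $F\cup H$; hence the modified central region has at most $|G|-2k<|G|$ red points for $k\ge 1$. The localization of the crossings then comes for free, exactly as in Lemma~\ref{lemma:F+H}: each threshold crossing is trapped between an instant where $G^k_t$ sits on $\Gamma_t$ (weight $\le\delta$) and one where it sits on the reversed $\Gamma_{t+\pi}$ (weight $\ge\delta$), and being at the same angle as, and between, these two parallel boundaries, the transition line lies in $\overline{C^\Gamma_t}$, i.e.\ $G^k_t\in C^\Gamma_t$; by Lemma~\ref{l:0-1} it is balanced. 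The endpoints of the range are the easy corners: for $k=0$ the value $\le\delta$ is immediate because $\Gamma_0$ itself has no point of $G$ to its right and so serves as $G^0_0$, while for $k\ge1$ the value $\ge\delta+1$ is the one forced by the waist bound above.

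The hard part will be making the comparison motion a bona fide member of $\mathcal{S}$. A $G^k$-rotation pivots only on points of $G\subseteq R$, so it is not literally an $R$-sliding rotation: when a point of $F\cup H$ changes sides, an $R$-sliding rotation would switch its pivot there, whereas the $G^k$-rotation does not. The main obstacle is therefore to insert the remaining red points as pivots and realize the modified boundary as a continuous, angular-wise monotone, positively oriented $R$-sliding rotation that still satisfies $\Sigma\le\delta$, and to check that the red count in its central region really drops below $|G|$. The asymmetry noted above (the upward excursion is cheap for $k\ge1$, the downward value is cheap for $k=0$, but in each case the \emph{other} crossing is not) shows that this construction, rather than the bracketing alone, is what must secure both transitions; compounding the difficulty, the weight is not monotone across the strip, because blue points interfere, so the bracketing cannot be read off directly and the minimality of~$\Gamma$ has to be invoked through the explicit modification.
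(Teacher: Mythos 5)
You have the right high-level strategy --- the missing transitions are ruled out by exhibiting a sliding rotation that beats the minimality of the waist of $\Gamma$ --- and your preliminary observations ($G=R\cap C^\Gamma_0$, $|G|$ equal to the waist, the reversal identity $\omega\mapsto 2\delta-\omega$, the bracketing against $\Gamma_t$ and the reversed $\Gamma_{t+\pi}$) are correct and consistent with the paper. But the proposal stops exactly where the proof has to happen: you repeatedly flag the construction of the competing sliding rotation as ``the hard part'' and ``the main obstacle'' without carrying it out. In the paper this step is short and concrete: when $\omega(G^k_0)\le\delta$, let $t_1$ and $t_2$ be the first and last angles with $G^k_t=\Gamma_t$ (and $t_1=t_2=2\pi$ if there are none --- note that your bracketing claim tacitly assumes such meetings exist, which they need not, since $G^k$ starts inside the central region and may never reach either boundary); if $\omega(G^k_t)$ never reached $\delta+1$ on $(0,t_1)\cup(t_2,2\pi)$, then concatenating $G^k$ on $(0,t_1)$, $\Gamma$ on $(t_1,t_2)$ and $G^k$ on $(t_2,2\pi)$ yields a positively oriented, $\delta$-preserving $R$-sliding rotation whose waist is strictly smaller than that of $\Gamma$, a contradiction.

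The more serious gap is that your plan is single-pronged: you only ever propose to build an $R$-sliding rotation with $\Sigma\le\delta$ by pushing $\Gamma$ inward ``on an angular interval where $\omega(G^k_t)$ stays $\le\delta$.'' This cannot handle the case $\omega(G^k_t)>\delta$ for all $t\in[0,2\pi]$, which is a priori possible and is precisely what must be excluded to obtain the transitions when $\omega(G^k_0)>\delta$: in that situation there is no interval on which a $G^k$ arc can be spliced into an $R$-sliding rotation satisfying $\Sigma\le\delta$. The paper disposes of this case by switching colours: take $\Sigma_t$ to be the parallel to $G^k_t$ through the first blue point to the right of $G^k_t$; this is a positively oriented $B$-sliding rotation with $\Sigma\ge\delta$ (because between $\Gamma_t$ and $G^k_t$ there are always at least two blue points), hence $\delta$-preserving, and it again contradicts the choice of $\Gamma$ as the element of $\mathcal{S}$ of smallest waist --- recall that $\mathcal{S}$ contains sliding rotations of both colours. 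Without this second construction, or some substitute for it, your argument does not close, and the asymmetry you yourself observe at the end of the proposal is a symptom of exactly this missing case.
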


\begin{proof}
Let us consider first the case when $r$ is odd and $k=\lfloor |G|/2\rfloor$.
$G^k_0$ and $G^k_{\pi}$ are the same line with opposite
directions. Therefore, if $\omega(G^k_{0})\leq \delta$ then $\omega(G^k_{\pi})> \delta$ and there
must be at least two transitions as stated. These transitions correspond to lines in
the central region because $\Gamma$ is positively oriented.

For the rest of cases, observe that, by construction, $G^k_{0}\in C^{\Gamma}_0$.
According to the value of $\omega(G^k_{0})$, we distinguish two cases:
\begin{itemize}

  \item $\omega(G^k_{0})\leq\delta$. If there exist some values for which $G^k_{t}=\Gamma_{t}$,
  let $t_1$ and $t_2$ be, respectively, the minimum and maximum of them. If there is
  no such value, take $t_1=t_2=2\pi$.
  If $G^k$ takes the value $\delta+1$ in the interval $(0,t_1)$ it must have transitions
  $\delta\rightsquigarrow\delta+1$ and $\delta+1\rightsquigarrow\delta$, and the same
  is true for $(t_2,2\pi)$. Finally, observe that $G^k$ must take the value $\delta+1$
  at least once,
  because in other case the sliding rotation obtained by concatenating $G^k$ in $(0,t_1)$,
  $\Gamma$ in $(t_1,t_2)$ and $G^k$ in $(t_2,2\pi)$ would be a $\delta$-preserving sliding rotation
  of waist smaller than the waist of $\Gamma$.

  \item $\omega(G^k_{0}) > \delta$. If there exist some values for which $G^k_{t}=\Gamma_{t}$,
  let $t_1$ and $t_2$ be, respectively, the minimum and maximum of them. $G^k_{t}$
  takes the value $\delta$ in the intervals $(0,t_1)$ and $(t_2,2\pi)$ and therefore
  the lemma follows.
  In other case, if $G^k_t$ takes the value $\delta$ in the central region, it must
  have also transition $\delta\rightsquigarrow\delta+1$.
  Finally, if $\omega(G^k_t) > \delta$ for all $t\in[0,2\pi]$  we could construct a sliding rotation~$\Sigma$
  contradicting the choice of~$\Gamma$: for each~$t$, consider as $\Sigma_t$ the parallel to~$G^k_t$
  which passes through the first blue point to the right of~$G^k_t$. It is easy to see that
  $\Sigma_t\geq\delta$, because between $\Gamma_t$ and $G^k_t$ there are always at least two blue points.

\end{itemize}
\vspace{-2.em}
\end{proof}

The following lemma, which already appeared as Claim~6.4 in~\cite{pach-pinchasi},
will be enough to conclude the proof of Theorem~\ref{thm:pach-pinchasi}.

\begin{lemma}
\label{lemma:balanced_or_recharge_G_k}
Transitions $\delta\rightsquigarrow \delta+1$ and $\delta+1\rightsquigarrow \delta$ in a $G^k$-rotation
are always either a balanced line or a
$\delta+1\rightsquigarrow \delta$ transition in an $F^j$-rotation, $j\in\{0,\ldots,|F|-1\}$ or an
$H^j$-rotation, $j\in\{0,\ldots,|H|-1\}$.
\end{lemma}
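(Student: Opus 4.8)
The plan is to classify every transition of the $G^k$-rotation by the point of~$S$ that crosses the directed line $G^k_t$ at that instant. Since this rotates the set $G\subseteq R$, the pivot is always a red point of~$G$, so by the argument of Lemma~\ref{l:0-1} a crossing point that itself lies in~$G$ only produces a pivot hand-off and leaves $\omega(G^k_t)$ unchanged; such events are never transitions. Hence every transition $\delta\rightsquigarrow\delta+1$ or $\delta+1\rightsquigarrow\delta$ is caused by a single point $s\in S\smallsetminus G=B\cup(F\cup H)$ passing from one side of the line to the other, and it suffices to handle the cases $s\in B$ and $s\in F\cup H$ separately.

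If~$s$ is blue I would argue exactly as in Lemma~\ref{l:0-1}. At the transition both~$s$ and the red pivot~$q$ lie on $G^k_t$, contributing $\omega(s)+\omega(q)=0$; since $\omega(S)=2\delta$, the two \emph{open} halfplanes share the remaining weight~$2\delta$. A transition between~$\delta$ and~$\delta+1$ forces the open right halfplane to have weight~$\delta$ at this instant, hence so does the open left halfplane, and the line through the differently coloured points~$s$ and~$q$ is balanced.

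The substantive case is $s\in F$ (with $s\in H$ identical). Let~$\ell$ be the undirected line through the pivot $q\in G$ and through~$s\in F$. By general position~$s$ is the only point of~$F$ on~$\ell$, so each of the two orientations of~$\ell$ is a line of some $F^j$-rotation, one with $j=|F\cap\ell^+|$ and the other with $|F|-1-j$ points of~$F$ to its right, both in $\{0,\ldots,|F|-1\}$. Because $q\notin F$, in either $F^j$-rotation the sweep past~$\ell$ crosses~$q$ without a pivot hand-off and so registers a genuine weight change. The key step is to pin down the sign: since the $G^k$-event is a $\delta\leftrightarrow\delta+1$ transition, the open right halfplane of~$\ell$ (computed over $S\smallsetminus\{q,s\}$) has weight~$\delta+1$, and as both boundary points~$q,s$ are red the complementary open halfplane also has weight $2\delta-(\delta+1)+2=\delta+1$. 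Thus in \emph{each} orientation the crossing of~$q$ toggles the right-halfplane weight between~$\delta+1$ and~$\delta$. Reversing the orientation of~$\ell$ interchanges its two halfplanes while leaving the counterclockwise spatial sweep about~$s$ unchanged, so the two orientations record this single crossing as transitions of opposite sign; exactly one of them is the required $\delta+1\rightsquigarrow\delta$ transition in an $F^j$-rotation.

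The hard part is precisely this sign bookkeeping in the red case. One has to confirm that crossing a non-$F$ point during an $F^j$-rotation is a weight change rather than a pivot hand-off, compute that both open halfplanes of~$\ell$ carry weight~$\delta+1$ because the two points on~$\ell$ are both red, and observe that reversing the orientation flips the transition sign while preserving the counterclockwise motion, guaranteeing that the correctly chosen orientation yields a decreasing transition. Everything else---the blue case and the reduction to a single crossing point---follows directly from Lemma~\ref{l:0-1} and general position.
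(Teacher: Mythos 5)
Your proof is correct and follows essentially the same route as the paper's: split on whether the crossing point is blue (balanced line, as in Lemma~\ref{l:0-1}) or lies in $F\cup H$, and in the latter case reinterpret the line through the pivot $g\in G$ and $f\in F$ as an event of an $F^j$-rotation pivoting at $f$, using the observation (which the paper also makes, via its figures) that both open halfplanes carry weight $\delta+1$ at that instant. Your explicit sign bookkeeping for the two orientations just makes rigorous what the paper delegates to Figure~\ref{fig:gk-f}.
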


\begin{proof}
On the one hand, a balanced line is achieved if there is such a
transition because a blue point is found. See
Figure~\ref{fig:B_k_rotation}. On the other hand, if the point
inducing the transition is~$r\in R$, then necessarily~$r\in
R\smallsetminus G$ (since the $G^k$-rotation changes pivot whenever
a point of~$G$ is found). Figure~\ref{fig:gk-f} illustrates that a
$\delta+1\rightsquigarrow \delta$ transition appears for an
$F^j$-rotation with pivot~$g$, both if~$f\in F$ is found in the tail
(left picture) or if~$f\in F$ is found in the head (right picture).
\begin{figure}[htb]
    \begin{center}
    \includegraphics{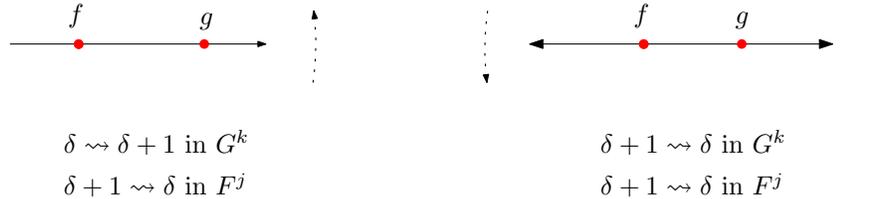}
    \caption{Transitions when a point~$f\in F\subset R$ found in a $G^k$-rotation induces a
    $\delta+1\rightsquigarrow \delta$ transition in an $F^j$-rotation.}
    \label{fig:gk-f}
    \end{center}
\end{figure}
Note that in the right picture the weight of both halfplanes is~$\delta+1$. The case in which the point
found is~$h\in H$ works similarly.
\end{proof}

The following simple observations show that the number of balanced lines is at least~$r$ which,
together with Remark~\ref{r:rem1}, finishes the proof of Theorem~\ref{thm:sharir-welzl}:

\begin{itemize}
\item[i)] Lemma~\ref{lemma:F+H} gives~$|F|+|H|$ different balanced lines.

\item[ii)] Lemmas~\ref{lemma:sign_change_G_k}
and~\ref{lemma:balanced_or_recharge_G_k} give~$|G|$ lines which are, either a balanced line,
or a $\delta+1\rightsquigarrow \delta$ transition at the central region for an~$F^j$-~or~$H^j$-rotation.

\item[iii)] Each transition in ii) forces a new $\delta\rightsquigarrow \delta+1$ transition at the central region for
an~$F^j$-~or~$H^j$-rotation which correspond, as in the proof of Lemma~\ref{lemma:F+H},
to a new balanced line.
\end{itemize}

%\section{Concluding remarks}

%All results in this paper can be adapted in order to prove the more
%general result in Theorem~\ref{thm:sharir-welzl}. Assume that
%$|B|=|R|+2\delta$. We say that a $B^k$-rotation is
%``sign-preserving'' if $B^k\geq \delta$ or $B^k<\delta$. Similarly,
%a $R^k$-rotation is ``sign-preserving'' if $R^k\leq \delta$ or
%$B^k>\delta$. The proofs can be directly adapted to show that every
%set as above has at least $|R|$ balanced lines.

\section{Acknowledgements}

The authors thank Jesús García for helpful discussions.

\end{document}